\newtheorem{theorem}{Theorem}
\theoremstyle{plain}
\newtheorem{definition}{Definition}
\newtheorem{lemma}{Lemma}
\newtheorem{proposition}{Proposition}
\newtheorem{remark}{Remark}
\numberwithin{equation}{section}
\begin{document}
\title[Cauchy Problem for High-Order parabolic equations]{Homotopy Regularization for a High-Order parabolic equation}

\author{P. \'Alvarez-Caudevilla}
\address[P. \'Alvarez-Caudevilla]{Universidad Carlos III de Madrid, Av. Universidad 30, 28911 Legan\'es (Madrid), Spain}%
\email[P. \'Alvarez-Caudevilla]{pcaudev@math.uc3m.es}%
\author{A. Ortega}
\address[A. Ortega]{Universidad Carlos III de Madrid, Av. Universidad 30, 28911 Legan\'es (Madrid), Spain}
\email[A. Ortega]{alortega@math.uc3m.es}%
\thanks{This paper has been partially supported by the Ministry of Economy and Competitiveness of
Spain under research project MTM2016-80618-P}
\date{\today}
\subjclass[2010]{Primary 35K30, 35K55, 35K65; Secondary 31B30} %
\keywords{High-Order parabolic equation, Homotopy deformation, Regularization arguments.}%

\begin{abstract}
In this work we study the solvability of the Cauchy Problem for a quasilinear degenerate high-order parabolic equation
\begin{equation*}
        \left\{
        \begin{tabular}{lcl}
        $u_t=(-1)^{m-1}\nabla\cdot(f^n(|u|)\nabla\Delta^{m-1}u)$ & &in $\mathbb{R}^N\times\mathbb{R}_+$, \\
        $u(x,0)=u_0(x)$& & in $\mathbb{R}^N$,
        \end{tabular}
        \right.
\end{equation*}
with $m\in\mathbb{N},\ m>1$ and $n>0$ a fixed exponent. Moreover, $f$ 
is a continuous monotone increasing positive bounded function with $f(0)=0$ and the initial data $u_0(x)$ is bounded smooth and compactly supported. 
Thus, through an homotopy argument based on an analytic $\varepsilon$-regularization of the degenerate term $f^n(|u|)$
we are able to extract information about the solutions inherited from the polyharmonic equation when $n=0$. 
\end{abstract}
\maketitle

%%%%%%%%%%%%%%%%%%%%%%%%%%%%%%%%%%%%%%%%%%%%%%%%%%%%%%%%%%%%%
\section{Introduction and main result.}
In this work we study the solvability of the Cauchy Problem for a quasilinear degenerate high-order parabolic equation of the form 
\begin{equation}\label{HGPP}
        \left\{
        \begin{tabular}{lcl}
        $u_t=(-1)^{m-1}\nabla\cdot(f^n(|u|)\nabla\Delta^{m-1}u)$ & &in $\mathbb{R}^N\times\mathbb{R}_+$, \\
        $u(x,0)=u_0(x)$& & in $\mathbb{R}^N$,
        \end{tabular}
        \right.
\end{equation}
with $m\in\mathbb{N},\ m>1$ and $n>0$ is a fixed exponent, $f$ is a continuous monotone increasing positive bounded function with $f(0)=0$ and the initial data $u_0(x)$ is a bounded smooth compactly supported function.

The principal issue to overcome in this paper is to detect proper solutions to the Cauchy Problem for the degenerate equation \eqref{HGPP} by uniformly parabolic analytic $\varepsilon$-regularizations. 

To that end, following the work \cite{Cau1}, we use an analytic homotopy approach based on \textit{a priori} estimates for solutions to uniformly parabolic analytic $\varepsilon$-regularization equations, namely
\begin{equation}\label{RHGPP}
        \left\{
        \begin{tabular}{lcl}
        $u_t=(-1)^{m-1}\nabla\cdot(\phi_{\varepsilon}(u)\nabla\Delta^{m-1}u)$ & &in $\mathbb{R}^N\times\mathbb{R}_+$, \\
        $u(x,0)=u_0(x)$& & in $\mathbb{R}^N$,
        \end{tabular}
        \right.
\end{equation}
where $\phi_{\varepsilon}(u)$, $\varepsilon\in(0,1]$ is an analytic $\varepsilon$-regularization such that $\phi_0(u)=f^n(|u|)$ and $\phi_1(u)=1$ using a classic technique relying on integral identities for weak solutions.

Next, we study an analytic homotopy transformation in both parameters, $\varepsilon\to0^+$ and $n\to0^+$ and describe branching of solutions to \eqref{HGPP} from the polyharmonic heat equation
\begin{equation}
\label{PHE33}
        \left\{
        \begin{tabular}{lcl}
        $u_t=-(-\Delta)^m u$ & &in $\mathbb{R}^N\times\mathbb{R}_+$, \\
        $u(x,0)=u_0(x)$& & in $\mathbb{R}^N$,
        \end{tabular}
        \right.
\end{equation}
which provides some qualitative oscillatory properties as well the uniqueness of solutions to \eqref{HGPP}, at least for small $n>0$. The case $m=2$ and $f(t)=t$ has been studied in \cite{Cau1}, 
however, in this paper we generalize the degenerate term $f^n(|u|)$ and under some assumptions we are able to perform an homotopy argument which provides us with the unique solutions to \eqref{HGPP} at least when the parameter $n$ is very close to zero.

In particular, we perform the homotopic deformation assuming that the fixed parameter $n>0$ is small enough. To this end, we say that \eqref{HGPP} is homotopic to the linear polyharmonic heat equation 
\eqref{PHE33}
if there exists a family of uniformly parabolic equations (the homotopic deformation) with a coefficient,
\begin{equation*}
\phi_{\varepsilon}(u)>0,\quad\mbox{analytic in both variables}\ u\in\mathbb{R}, \varepsilon\in(0,1],
\end{equation*}
with unique analytic solutions $u_{\varepsilon}(x,t)$ of the problem
\begin{equation*}
        \left\{
        \begin{tabular}{lcl}
        $u_t=(-1)^{m-1}\nabla\cdot(\phi_{\varepsilon}(u)\nabla\Delta^{m-1}u)$ & &in $\mathbb{R}^N\times\mathbb{R}_+$, \\
        $u(x,0)=u_0(x)$& & in $\mathbb{R}^N$,
        \end{tabular}
        \right.
\end{equation*}
such that $\phi_1(u)=1$ and $\phi_{\varepsilon}(u)\to f^n(|u|)$ uniformly on compact sets as $\varepsilon\to 0^+$. Based on the ideas of \cite{Cau1} we choose the homotopic path to be
\begin{equation}\label{REG1}
\phi_{\varepsilon}(u)=f^n(\varepsilon)+(1-\varepsilon)f^n\left((\varepsilon^2+u^2)^{1/2}\right).
\end{equation}
Moreover, using classic parabolic theory (see for instance \cite{Eid, Fri}) the non-degenerate equation \eqref{RHGPP} has a unique classical solution $u_{\varepsilon}(x,t)$ analytic in the variables $\varepsilon,x,t$. Thus, as it is noted in \cite{Cau1}, the homotopic deformation is a continuous deformation from solutions to \eqref{HGPP} to solutions to \eqref{PHE} for which important information is inherited 
such as uniqueness, oscillatory properties (changing sign) as well as the solvability.

Therefore, we can know define what a proper solution is in the following terms. 
\begin{definition} We say that $u(x,t)$ is a \textit{proper} solution to the Cauchy Problem \eqref{HGPP} if 
\begin{equation}\label{limit}
u_{\varepsilon}(x,t)\to u(x,t),\ \mbox{as}\ \varepsilon\to 0^+,
\end{equation}
where $\{u_{\varepsilon}(x,t)\}_{\varepsilon\in(0,1]}$ is the family of classical global solutions to the regularized Cauchy Problem \eqref{RHGPP}
\end{definition}
As we will see, due to the similarity of the expressions for weak solutions to the Cauchy Problem \eqref{HGPP} and the Free Boundary Problem corresponding to the evolution of the support of the solution of \eqref{HGPP} our analysis is unable to distinguish both type of solutions. Another issue that we will be unable to solve, due to the nature of the term $f(|u|)$, is whether the limit of $u_{\varepsilon_k}(x,t)$ can be taken independent of the subsequence $\varepsilon_k\to0^+$. In the case $f(t)=t$, thanks to the scaling properties of $f(t)$, this problem is studied with an affirmative conclusion; see \cite{Cau1}. Also, we can not discard the dependence of the solution from the type of analytic $\varepsilon$-regularization $\phi_{\varepsilon}(u)$. Hence, we must carry out alternative arguments which could solve some of the issues explained above.

Subsequently, after this limit procedure in the $\varepsilon$-regularization we perform a second limit as $n\to0^+$. That is, a continuous connection with solutions to the polyharmonic heat equation 
\eqref{PHE33}. Thus, $u(x,t)$ in \eqref{limit} is a solution to \eqref{HGPP} if 
\begin{equation*}
u(x,t)\to u_{PH}(x,t),\quad\mbox{as}\ n\to0^+.
\end{equation*}
Finally, we perform a double limit $n,\varepsilon\to0^+$ from which we obtain the conditions on the parameters $\varepsilon$ and $n$ needed to obtain such a functional convergence. 
As we have said, performing that limit over integral identities defining weak solutions results inconclusive 
to determine proper solutions to the Cauchy Problem from those to the FBP. To carry out this step we choose the simpler path
\begin{equation*}
\phi_{\varepsilon}(u)=f^n\left((\varepsilon^2+u^2)^{1/2}\right).
\end{equation*}
Now we state the main result of the paper which will provide us the definition of a proper solution of the Cauchy Problem \eqref{HGPP}.
\begin{theorem}\label{th}
Suppose that 
\begin{equation}
n|\ln f(\varepsilon(n))|\to0,\quad\mbox{as}\ n\to0^+,
\end{equation}
and the regularization family $\{u_{\varepsilon}(x,t)\}_{\varepsilon\in(0,1]}$ is uniformly bounded. Then 
\begin{enumerate}
\item The solution $u(x,t)$ to the regularized problem 
\begin{equation*}
        \left\{
        \begin{tabular}{lcl}
        $u_t=(-1)^{m-1}\nabla\cdot(f^n\left((\varepsilon^2+u^2)^{1/2}\right)\nabla\Delta^{m-1}u)$ & &in $\mathbb{R}^N\times\mathbb{R}_+$, \\
        $u(x,0)=u_0(x)$& & in $\mathbb{R}^N$,
        \end{tabular}
        \right.
\end{equation*}
converges uniformly to the solution $u_{PH}(x,t)$ to the polyharmonic heat equation \eqref{PHE} as $n\to0^+$ and $\varepsilon\to0^+$.
\item If the convolution 
\begin{equation*}
\varphi(x,t)=-\int_0^t\nabla H(x,t-s)\ast\ln|u_{PH}(x,s)|\nabla\Delta^{m-1}u_{PH}(x,s)ds,
\end{equation*}
remains bounded for the solution to the polyharmonic heat equation \eqref{PHE}, the rate of convergence as $n\to0^+$ of the asymptotic expansion\break $u(x,t)=u_{PH}(x,t)+V$ is given by
\begin{equation*}
V:=n\varphi+o(n).
\end{equation*}
\end{enumerate}
\end{theorem}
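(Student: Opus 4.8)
The plan is to read both assertions as a perturbation of the linear polyharmonic semigroup $e^{tL}$, $L:=-(-\Delta)^m$, whose kernel we write $H(x,t)$, so that $u_{PH}(\cdot,t)=H(\cdot,t)\ast u_0$. Splitting $\phi_\varepsilon(u)=1+(\phi_\varepsilon(u)-1)$ in \eqref{RHGPP} and integrating by parts once to move the divergence onto the kernel, a solution $u$ of the $(n,\varepsilon)$-regularized problem satisfies the Duhamel identity
\begin{equation*}
u(\cdot,t)=u_{PH}(\cdot,t)+(-1)^{m-1}\int_0^t\nabla H(\cdot,t-s)\ast\big[(\phi_\varepsilon(u)-1)\,\nabla\Delta^{m-1}u\big](\cdot,s)\,ds .
\end{equation*}
Keeping a single derivative on $H$ leaves the kernel time-integrable, $\|\nabla H(\cdot,\tau)\|_{L^1}\le C\,\tau^{-1/(2m)}$, which is what makes the estimates below close.

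\medskip\noindent\emph{Assertion (1).} First I would estimate $\|\phi_\varepsilon(u)-1\|_{L^\infty}$. Since $\phi_\varepsilon(v)=\exp\!\big(n\ln f((\varepsilon^2+v^2)^{1/2})\big)$ and $f$ is positive, increasing and bounded with $f(0)=0$, the uniform bound $\|u\|_{L^\infty}\le M$ on the family gives $f(\varepsilon)\le f((\varepsilon^2+u^2)^{1/2})\le\|f\|_{L^\infty}$, hence
\begin{equation*}
\big|n\ln f\big((\varepsilon^2+u^2)^{1/2}\big)\big|\le n\big(|\ln f(\varepsilon)|+|\ln\|f\|_{L^\infty}|\big)\longrightarrow0
\end{equation*}
along $\varepsilon=\varepsilon(n)$, by the standing hypothesis. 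Thus $\phi_\varepsilon(u)\to1$ uniformly, and the ellipticity constants of \eqref{RHGPP}, squeezed between $f^n(\varepsilon)$ and $\|f\|_{L^\infty}^{\,n}$, both tend to $1$: the regularized problem becomes uniformly parabolic \emph{in the limit}. I would then invoke the a priori and parabolic regularity estimates (in the spirit of \cite{Cau1}, using \cite{Eid,Fri}); combined with the uniform boundedness of the family, these propagate to bounds, uniform in $(n,\varepsilon)$ on each strip $\mathbb{R}^N\times[0,T]$, for the quantities in the Duhamel term, in particular $\sup_{s\le T}\|\nabla\Delta^{m-1}u(\cdot,s)\|_{L^\infty}\le C(T)$. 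Substituting into the identity for $w:=u-u_{PH}$ (with $w(\cdot,0)=0$) and using the kernel bound,
\begin{equation*}
\|w(\cdot,t)\|_{L^\infty}\le C\,\|\phi_\varepsilon(u)-1\|_{L^\infty}\int_0^t(t-s)^{-1/(2m)}\,ds\le C(T)\,\|\phi_\varepsilon(u)-1\|_{L^\infty}\longrightarrow0 ,
\end{equation*}
i.e. $u\to u_{PH}$ uniformly as $n,\varepsilon\to0^+$. (If the derivative bound is available only for $u$ and not directly for $w$, one runs a genuine Gronwall loop on a short time interval and iterates in $t$.)

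\medskip\noindent\emph{Assertion (2).} Here I make the perturbation quantitative. Expanding the exponential,
\begin{equation*}
\phi_\varepsilon(u)-1=n\,\ln f\big((\varepsilon^2+u^2)^{1/2}\big)+O\!\big(n^2\,|\ln f((\varepsilon^2+u^2)^{1/2})|^2\big),
\end{equation*}
and inserting $u=u_{PH}+O(n)$ from (1) together with $\varepsilon(n)\to0$, the bracket in the Duhamel term converges, on compact sets away from the nodal set of $u_{PH}$ and by continuity of $\ln f$, to $n\,\ln f(|u_{PH}|)\,\nabla\Delta^{m-1}u_{PH}$ up to lower order; this is exactly the integrand defining the corrector $\varphi$ in the statement (displayed with $\ln|u_{PH}|$, i.e. in the model case $f(t)=t$). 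One therefore reads off $u=u_{PH}+n\varphi+o(n)$, \emph{provided} the errors are controlled: the hypothesis that $\varphi$ stays bounded is precisely what tames the logarithmic singularity on the (generally non-empty, since $u_{PH}$ oscillates) nodal set $\{u_{PH}=0\}$, and the $O(n^2)$ contribution, convolved against the same integrable kernel, is then $o(n)$ uniformly on $\mathbb{R}^N\times[0,T]$.

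\medskip\noindent\emph{Main obstacle.} The delicate step is the uniform-in-$(n,\varepsilon)$ control of $\nabla\Delta^{m-1}u$ (and of the H\"older norm of $\phi_\varepsilon(u)$ needed to run Schauder-type estimates): \eqref{RHGPP} is non-degenerate only for $\varepsilon>0$, its lower ellipticity constant $f^n(\varepsilon)$ collapses as $\varepsilon\to0$, and it is solely the coupling $n|\ln f(\varepsilon(n))|\to0$ that restores uniform parabolicity in the limit. Making this quantitative and subsequence-independent is where the structural assumptions on $f$ really enter, and it is also the reason one cannot in general discard dependence on the chosen path $\phi_\varepsilon$. A secondary but genuine difficulty, confined to assertion (2), is the integrability near $\{u_{PH}=0\}$ of $\ln f(|u_{PH}|)\,\nabla\Delta^{m-1}u_{PH}$ after convolution with $\nabla H$; rather than establish it in general we take it as a hypothesis, which is why (2) is stated conditionally.
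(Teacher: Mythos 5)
Your overall strategy coincides with the paper's: both write the regularized solution via the Duhamel identity $u=\mathcal H\ast u_0+\int_0^t\nabla\mathcal H(\cdot,t-s)\ast\Theta_{n,\varepsilon}(u)\nabla\Delta^{m-1}u\,ds$ with $\Theta_{n,\varepsilon}(u)=1-f^n((\varepsilon^2+u^2)^{1/2})$, expand $1-f^n=-n\ln f(\cdot)(1+o(n))$, and identify the corrector $\varphi$ by inserting $u=u_{PH}+n\varphi+o(n)$ and letting $n\to0^+$ (you correctly note that the theorem's display with $\ln|u_{PH}|$ is the $f(t)=t$ specialization of the $\ln f(|u_{PH}|)$ one actually derives). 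Your observation that monotonicity and boundedness of $f$ give $|n\ln f((\varepsilon^2+u^2)^{1/2})|\le n|\ln f(\varepsilon)|+n|\ln C_f|\to0$ uniformly is cleaner than the paper's treatment, which splits into the sets $\{|u|\ge t_i\}$ and $\mathcal D_{i,\varepsilon}=\{\varepsilon^2\le\varepsilon^2+u^2\le t_i\}$ and estimates the worst singularity on the degenerate set by $O(\ln f(\varepsilon))$. The one substantive divergence --- and the gap in your plan --- is how the product $\Theta_{n,\varepsilon}(u)\nabla\Delta^{m-1}u$ is controlled: you invoke $\sup_{s\le T}\|\nabla\Delta^{m-1}u(\cdot,s)\|_{L^\infty}\le C(T)$ uniformly in $(n,\varepsilon)$, which is not available (the Schauder constants for \eqref{RHGPP} degenerate with the lower ellipticity bound $f^n(\varepsilon)$ as $\varepsilon\to0$, and no such pointwise estimate is proved anywhere in the paper); you flag this yourself as the main obstacle but do not close it. The paper circumvents it by pairing the smallness of $\Theta_{n,\varepsilon}$ with the $\varepsilon$-uniform weighted $L^2$ Bernis--Friedman estimates of Proposition \ref{ber_estimate} --- in particular $\iint f^n((\varepsilon^2+u_\varepsilon^2)^{1/2})|\nabla\Delta^{m-1}u_\varepsilon|^2\le K$ and $\|h_\varepsilon\|_{L^2}\le K$ --- together with Young's inequality for convolutions, absorbing the logarithm on the non-degenerate set via $|\ln t|\le cf^{n/2}(t)$ for $t\ge t_1$, so that only space-time $L^2$ control of the top-order term is ever needed. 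If you replace your pointwise bound by this energy estimate, your argument matches the paper's at the same (admittedly formal) level of rigor.
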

Thanks to the previous theorem we can assert that there exists a branch of solutions to the high-order equation \eqref{HGPP} emanating at $n=0^+$ from the unique solution of the polyharmonic heat equation \eqref{PHE}.
\subsection{Comparison between Cauchy Problem and Free Boundary Problem}
For both problems, the Cauchy problem and the FBP corresponding to the evolution of the support of the solution of \eqref{HGPP}, we assume that the solutions satisfy the following standard free boundary conditions:
\begin{equation}\label{boundary}
        \left\{
        \begin{tabular}{ll}
        $u=0$ & zero-height, \\
        $\nabla u=\Delta u=\nabla\Delta u=\ldots=\Delta^{m-1}=0$& zero contact angle,\\
				$\overline{\textbf{n}}\cdot\left(f^n(|u|)\nabla\Delta^{m-1}u\right)=0$& zero-flux 
        \end{tabular}
        \right.
\end{equation}
at the interface $\Gamma_0[u]$, i.e., the lateral boundary
\begin{equation*}
supp\ u\subset\mathbb{R}^N\times\mathbb{R}_+.
\end{equation*}
Due to the zero-flux condition the total mass,
\begin{equation*}
M(x,t):=\int u(x,t)dx
\end{equation*}
is preserved, since differentiating under the integral sign with respect to the temporal variable and using the Divergence Theorem,

\begin{equation*}
\frac{d}{dt}M(x,t)=(-1)^{m-1}\int_{\Gamma_0\cap\{t\}}\overline{\textbf{n}}\cdot\left(f^n(|u|)\nabla\Delta^{m-1}u\right)d\sigma=0.
\end{equation*}

%%%%%%%%%%%%%%%%%%%%%%%%%%%%%%%%%%%%%%%%%%%%%%%%%%%%%%%%%%%%%%%%%%%%%%%%%%%%%%%%%%%%%%%%%%%
\section{polyharmonic heat equation when $n=0$}
To study the solvability of the Cauchy Problem \eqref{HGPP} we use an analytic homotopic deformation from \eqref{HGPP} to an equation that provides us some useful information of its solutions, namely, to the polyharmonic heat equation,
\begin{equation}\label{PHE}
        \left\{
        \begin{tabular}{lcl}
        $u_t=-(-\Delta)^m u$ & &in $\mathbb{R}^N\times\mathbb{R}_+$, \\
        $u(x,0)=u_0(x)$& & in $\mathbb{R}^N$.
        \end{tabular}
        \right.
\end{equation}
This equation has been extensively studied in the last years \cite{Bar, Gaz2, Glk1}. It is well know that for smooth compactly supported initial data $u_0(x)$, satisfying a growth condition at infinity, see \cite{Eid},
\begin{equation}\label{weightedLp}
u_0(x)\in L_{\rho}^{2}(\mathbb{R}^N),\quad \rho(x)=e^{a|x|^{\alpha}},
\end{equation}
for some constant $a>0$ and $\alpha=\frac{2m}{2m-1}$, the polyharmonic heat equation \eqref{PHE} admits an unique classic solution given by the Poisson-type integral,
\begin{equation*}
u_{PH}(x,t)=\mathcal{H}(x,t)\ast u_0(x)=t^{-\frac{N}{2m}}\int_{\mathbb{R}^N}F\left((x-z)t^{-\frac{1}{2m}}\right)u_0(z)dz,
\end{equation*}
where $\mathcal{H}(x,t)$ is the fundamental solution for \eqref{PHE},
\begin{equation*}
\mathcal{H}(x,t)=t^{-\frac{N}{2m}}F\left(\frac{x}{t^{\frac{1}{2m}}}\right),
\end{equation*}
such that the rescaled kernel $F(y)$, with $y=\frac{x}{t^{\frac{1}{2m}}}$, is the unique radial solution of the elliptic equation
\begin{equation}\label{operator}
\mathcal{L}[F]\equiv-(-\Delta)^{m}F+\frac{1}{2m}y\cdot\nabla F+\frac{N}{2m}F=0,\quad \mbox{in}\ \mathbb{R}^N,\quad \int_{\mathbb{R}^N}F(y)dy=1. 
\end{equation}
It can be seen, \cite{Eid}, that the profile function $F(y)$ decays exponentially at infinity. Specifically, there exists some positive constants $C>1$, $a>0$ depending on $N$ and $m$ such that 
\begin{equation*}
|F(y)|\leq C\omega e^{-a|y|^{\alpha}},\quad\mbox{in}\ \mathbb{R}^N,\ \alpha=\frac{2m}{2m-1}\ \mbox{and}\ \omega=\int_{\mathbb{R}^N}e^{-a|y|^{\alpha}}dy.
\end{equation*}
On the other hand, using the Fourier Transform (see for instance \cite{Car,Eid}) the profile $F(y)$ is also given by the expression 
\begin{equation}\label{FS}
F(y)=F_{m,N}(y)=|y|^{1-N}\int_{0}^{\infty}e^{-s^{2m}}(|y|s)^{\frac{N}{2}}J_{\frac{N-2}{2}}(|y|s)ds,
\end{equation}
where $J_k$ is the $k$-th Bessel function of first kind. Note that thanks to \eqref{FS} and contrary to what happens in the case $m=1$ where the profile function is the well known Gaussian function, we know that the kernel $F_{m,N}(y)$ depends not only on the parameter $m$ but also on the dimension $N$. 

Moreover, due to the presence of the Bessel functions in the integral expression of $F(y)$, the solutions to the polyharmonic heat equation are oscillatory functions.
Another big difference between the case $m=1$ and $m>1$.  While in the first case the positivity of the solutions is preserved, this is no longer true for solutions to \eqref{PHE} with $m>1$. Nevertheless, those solutions exhibit what is called (see for instance \cite{Fer, Gaz}) \textit{eventual positivity}, i.e. there exists a time $T=T(u_0(x),K)>0$ such that for any compact set $K\subset \mathbb{R}^N$ and any compactly supported initial data $u_0(x)$,  
\begin{equation*}
u_{PH}(x,t)>0,\ \forall x\in K,\ \forall t>T.
\end{equation*}
Let us mention that the general case with $m > 1$, $m\in\mathbb{N}$, was stated as an open problem by Barbatis-Gazzola \cite{Bar} and recently
solved by Ferreira-Ferreira \cite{FF}. In fact, in \cite{FF} it was showed the eventual positivity for
every real number $m>1$, commonly known as \textit{eventual local positivity}.
To finish this brief exposition for some of the properties of the polyharmonic equation \eqref{PHE}, 
let us recall some facts about the spectrum of the operator $\mathcal{L}$ denoted by \eqref{operator}. 
As it is easily verified, for $m>1$ the operator $\mathcal{L}$ is not symmetric and does not admit a self-adjoint extension. Ascribing to the operator $\mathcal{L}$ the domain $H_{\rho}^{2m}(\mathbb{R}^N)$ it can be proved, see \cite{Ego,Glk2}, the following.
\begin{lemma}\label{espectro}\hfill
\begin{itemize}
\item The operator $\mathcal{L}:H_{\rho}^{2m}(\mathbb{R}^N)\mapsto L_{\rho}^{2}(\mathbb{R}^N)$ is a bounded operator with only the real point spectrum
\begin{equation*}
\sigma(\mathcal{L})=\left\{\lambda_{\beta}=-\frac{|\beta|}{2m},|\beta|=0,1,2,\ldots\right\}.
\end{equation*}
Eigenvalues $\lambda_{\beta}$ have finite multiplicity with eigenfunctions 
\begin{equation*}
\psi_{\beta}(y)=\frac{(-1)^{|\beta|}}{\sqrt{\beta!}}D^{\beta}F(y)\equiv\frac{(-1)^{|\beta|}}{\sqrt{\beta!}}\left(\frac{\partial}{\partial y_1}\right)^{\beta_1}\cdots\left(\frac{\partial}{\partial y_N}\right)^{\beta_N}F(y).
\end{equation*}
\item The set of eigenfunctions $\Phi=\{\psi_{\beta},|\beta|=0,1,2,\ldots \}$ is complete in $L_{\rho}^{2}(\mathbb{R}^N)$ 
\end{itemize}
\end{lemma}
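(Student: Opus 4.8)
The plan is to settle the three assertions in turn, reducing everything to the exponential decay of $F$ and of its derivatives recorded above and to the standard spectral theory of operators with compact resolvent in weighted $L^2$ spaces. \textbf{Step 1 (boundedness of $\mathcal{L}$).} First I would check that $\mathcal{L}$ maps $H^{2m}_{\rho}(\mathbb{R}^N)$ continuously into $L^2_{\rho}(\mathbb{R}^N)$. The polyharmonic part $-(-\Delta)^m$ is bounded $H^{2m}_{\rho}\to L^2_{\rho}$ straight from the definition of the weighted norm (the weight $\rho=e^{a|x|^{\alpha}}$ is smooth and strictly positive), and the zeroth order term $\tfrac{N}{2m}F$ is harmless. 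The only delicate term is the drift $\tfrac{1}{2m}\,y\cdot\nabla F$: its coefficient grows linearly, but since $\alpha=\tfrac{2m}{2m-1}>1$ the Gaussian-type weight $\rho$ absorbs polynomial factors, so $\||y|\,\nabla F\|_{L^2_{\rho}}$ is dominated by the $H^{2m}_{\rho}$-norm of $F$ after passing to a slightly heavier admissible weight; this is the weighted estimate carried out in \cite{Ego,Glk2}.

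\textbf{Step 2 (the eigenpairs).} Next I would differentiate the defining identity $\mathcal{L}[F]=0$ of \eqref{operator}. Since $\partial_{y_i}$ commutes with $(-\Delta)^m$ and obeys the commutator relation $\partial_{y_i}(y\cdot\nabla)=(y\cdot\nabla)\partial_{y_i}+\partial_{y_i}$, an induction on $|\beta|$ yields $\mathcal{L}[D^{\beta}F]=-\tfrac{|\beta|}{2m}\,D^{\beta}F$, so each $\psi_{\beta}=\tfrac{(-1)^{|\beta|}}{\sqrt{\beta!}}D^{\beta}F$ is an eigenfunction with eigenvalue $\lambda_{\beta}=-\tfrac{|\beta|}{2m}$, the normalising constant being the one that makes $\{\psi_{\beta}\}$ biorthogonal to the eigenfunctions of the conjugate operator. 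Every $\psi_{\beta}$ lies in $H^{2m}_{\rho}$ because $F$ and all its derivatives decay like $e^{-a|y|^{\alpha}}$ (from the pointwise bound on $F$ quoted above together with interior elliptic estimates, or from \eqref{FS}). For fixed $k$ the functions $\{D^{\beta}F:|\beta|=k\}$ are linearly independent, since $\widehat{F}(\xi)=e^{-|\xi|^{2m}}$ (up to normalisation) never vanishes and distinct monomials of degree $k$ are independent; hence the eigenvalue $-k/2m$ has multiplicity at least $\binom{k+N-1}{N-1}$, which is finite.

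\textbf{Step 3 (exhaustion of the spectrum and completeness).} The resolvent of $\mathcal{L}$ is compact (equivalently, the rescaled polyharmonic semigroup $e^{t\mathcal{L}}$, which is the rescaling of the convolution semigroup $e^{-t(-\Delta)^m}$ with its rapidly decreasing kernel, is compact for $t>0$), so $\sigma(\mathcal{L})$ is a discrete set of eigenvalues of finite algebraic multiplicity. To identify these with the $\lambda_{\beta}$, to see that they are real, and to exclude nontrivial Jordan chains, I would pass to the formally conjugate rescaled operator $\mathcal{L}^{*}=-(-\Delta)^m-\tfrac{1}{2m}\,y\cdot\nabla$, whose eigenfunctions are polynomials $\psi^{*}_{\beta}$ of degree $|\beta|$ with the same eigenvalues $\lambda_{\beta}$; these polynomials are complete in the corresponding dual weighted space and biorthogonal to the $\psi_{\beta}$, so the expansion along $\Phi$ reproduces the spectral decomposition of $e^{t\mathcal{L}}$. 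This forces $\sigma(\mathcal{L})=\{\lambda_{\beta}\}$ and the completeness of $\Phi$ in $L^2_{\rho}$, exactly as in \cite{Ego,Glk2}.

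The main obstacle is Step 3: for $m>1$ the operator $\mathcal{L}$ is not symmetric and admits no self-adjoint extension, so the spectral theorem is of no use; one must genuinely construct the biorthogonal system of polynomial eigenfunctions of $\mathcal{L}^{*}$ and prove its completeness in the weighted space — a weighted density statement for polynomials — in order to rule out hidden spectrum and nontrivial root subspaces and to obtain completeness of $\Phi$. A secondary technicality is the weighted estimate of Step 1 taming the linearly growing drift coefficient.
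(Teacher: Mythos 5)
The paper does not prove Lemma~\ref{espectro}: it is quoted from the literature with the attribution ``see \cite{Ego,Glk2}'', so there is no in-paper argument to compare against. Your sketch reproduces the standard proof from precisely those references --- differentiating $\mathcal{L}[F]=0$ in \eqref{operator} and using the commutator $\partial_{y_i}(y\cdot\nabla)=(y\cdot\nabla)\partial_{y_i}+\partial_{y_i}$ to get the eigenpairs, checking linear independence via the nonvanishing of $\widehat{F}$, and obtaining completeness and exhaustion of the spectrum from the biorthogonal polynomial eigenfunctions of $\mathcal{L}^{*}$ in $L^2_{\rho^*}$ --- and you correctly isolate the two genuinely technical points (the weighted bound on the drift term $y\cdot\nabla$, which works because $(\alpha-1)(2m-1)=1$, and the completeness/absence-of-residual-spectrum argument for the non-self-adjoint case) as the parts that must be taken from \cite{Ego,Glk2}. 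The outline is correct and consistent with the cited sources; nothing in it conflicts with the paper.
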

In the classical case $m=1$, where the profile $F(y)$ is the rescaled Gaussian kernel, the eigenfunctions $\psi_{\beta}(y)$ are given by
\begin{equation*}
\psi_{\beta}(y)=e^{-\frac{|y|^2}{4}}\mathcal{H}_{\beta}(y),\quad \mathcal{H}_{\beta}(y)\equiv \mathcal{H}_{\beta_1}(y_1)\ldots \mathcal{H}_{\beta_N}(y_N),
\end{equation*}
where $\mathcal{H}_{\beta}$ denote the Hermite polynomials in $\mathbb{R}^N$. The operator $\mathcal{L}$ with the domain $H_{\rho}^{2}(\mathbb{R}^N)$, $\rho=e^{\frac{|y|^2}{4}}$, is 
self-adjoint and the eigenfunctions form an orthonormal basis in $L_{\rho}^{2}(\mathbb{R}^N)$. In \cite{Ego} it is also proved that the adjoint operator, 
\begin{equation*}
\mathcal{L}^*=-(-\Delta)^{m}-\frac{1}{2m}y\cdot\nabla,
\end{equation*}
possesses a set of eigenfunctions that forms an orthonormal basis in $L_{\rho^*}^{2}(\mathbb{R}^N)$, with the specific 
exponentially decaying weight function $\rho^*(y)=e^{-a|y|^{\alpha}}$.\newline Moreover, $\mathcal{L}^*:H_{\rho^*}^{2m}(\mathbb{R}^N)\mapsto L_{\rho^*}^{2}(\mathbb{R}^N)$ is a bounded linear operator,
\begin{equation*}
\langle \mathcal{L}[v],w\rangle=\langle v,\mathcal{L}^*[w]\rangle \quad\mbox{for any }v\in H_{\rho}^{2m}(\mathbb{R}^N),\ w\in H_{\rho^*}^{2m}(\mathbb{R}^N),
\end{equation*}
and $\sigma(\mathcal{L}^*)=\sigma(\mathcal{L})$ with the eigenfunctions $\{\psi_{\beta}^*(y)\}$ being polynomials of order $|\beta|$,
\begin{equation*}
\sqrt{\beta!}\psi_{\beta}^*(y)=y^{\beta}+\sum_{j=1}^{\left[\frac{|\beta|}{2m}\right]}\frac{1}{j!}(-\Delta)^{mj}y^{\beta}.
\end{equation*}

\medskip
%%%%%%%%%%%%%%%%%%%%%%%%%%%%%%%%%%%%%%%%%%%%%%%%%%%%%%%%%%%%%

\section{Preliminary estimates: Bernis-Friedman type-inequality.}
Throughout this section, for any $\varepsilon\in(0,1]$ let $u_{\varepsilon}(x,t)$ be the solution of Cauchy Problem for the regularized non-degenerate uniformly parabolic equation \eqref{RHGPP}. 
By classic parabolic theory \cite{Eid, Fri} this family is continuous and analytic in $\varepsilon\in(0,1]$ in the appropriate functional topology, at least in some interval $[0,T]$. Moreover, all the derivatives are H\"older continuous in $\overline{\Omega}\times[0,T]$. 
From now on, we denote with $\Omega$ either $\mathbb{R}^N$ or, equivalently, the bounded domain $supp\ u\cap\{t\}$ (the section of the support).

The following result comes from similar ideas as those performed by Bernis-Friedman \cite{Ber} and will be used in the sequel to prove some of the main results of this work.
\begin{proposition}\label{ber_estimate}
Let $u_{\varepsilon}(x,t)$ be the unique global solution to the Cauchy Problem for the regularized non-degenerate equation \eqref{RHGPP}. Then for $t\in[0,T]$, there exists $K>0$ independent of $\varepsilon$ and $T$ such that for $j\in\mathbb{N}$,
\begin{enumerate}
\item $\displaystyle\int_{\Omega}|\Delta^{\frac{m-1}{2}}u_{\varepsilon}(x,t)|^2dx\leq K$ if $m=2j+1$.
\item $\displaystyle\int_{\Omega}|\nabla\Delta^{\frac{m-2}{2}}u_{\varepsilon}(x,t)|^2dx\leq K$  if $m=2j$.
\item $\displaystyle\int_{\Omega}|\Delta^{\frac{m-2}{2}}u_{\varepsilon}(x,t)|^2dx\leq K,\ \mbox{if}\ m=2j,\ j\in\mathbb{N}$ .
\item $\displaystyle\int_{\Omega}u_{\varepsilon}(x,t)dx\leq K$.
\item Setting $h_{\varepsilon}=\phi_{\varepsilon}(u_{\varepsilon})\nabla\Delta^{m-1}u_{\varepsilon}$, we have $||h_{\varepsilon}||_{L^2(\Omega\times(0,t))}\leq K$.
\end{enumerate}
\end{proposition}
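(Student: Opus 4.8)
The plan is to derive all five estimates from a single Lyapunov-type (energy) identity obtained by multiplying the regularized equation \eqref{RHGPP} by a suitable test function and integrating by parts, exploiting that $\phi_\varepsilon(u)>0$ is bounded below and that $u_0$ is compactly supported and smooth. First I would multiply \eqref{RHGPP} by $\Delta^{m-1}u_\varepsilon$ and integrate over $\Omega$. After $2(m-1)$ integrations by parts, using the free boundary conditions \eqref{boundary} (all derivatives up to order $m-1$ vanishing on $\Gamma_0$, plus the zero-flux condition) to kill the boundary terms, the left-hand side becomes the time derivative of a quantity like $\tfrac12\frac{d}{dt}\int_\Omega |\nabla\Delta^{(m-1)/2 - 1/2}u_\varepsilon|^2$ (the precise norm depending on the parity of $m$), and the right-hand side becomes $-\int_\Omega \phi_\varepsilon(u_\varepsilon)|\nabla\Delta^{m-1}u_\varepsilon|^2 \le 0$. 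Integrating in time on $[0,t]$ then gives simultaneously: (a) the dissipation bound $\int_0^t\int_\Omega \phi_\varepsilon(u_\varepsilon)|\nabla\Delta^{m-1}u_\varepsilon|^2 \le C(u_0)$, and (b) the uniform-in-$t$ bound on the appropriate half-derivative of $u_\varepsilon$, which is precisely items (1) and (2) depending on whether $m=2j+1$ or $m=2j$.

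For item (3), when $m=2j$, I would interpolate: having controlled $\int_\Omega|\nabla\Delta^{(m-2)/2}u_\varepsilon|^2$ from (2) and $\int_\Omega u_\varepsilon$ (or an $L^1$/$L^2$ bound on $u_\varepsilon$ itself, see below), a Gagliardo–Nirenberg inequality on the bounded section $\Omega=\mathrm{supp}\,u\cap\{t\}$ gives control of the intermediate derivative $\int_\Omega |\Delta^{(m-2)/2}u_\varepsilon|^2$. Item (4), $\int_\Omega u_\varepsilon\,dx\le K$, follows from mass conservation: as noted in the excerpt, the zero-flux boundary condition forces $\frac{d}{dt}M(t)=0$, so $\int_\Omega u_\varepsilon(x,t)\,dx = \int u_0(x)\,dx$ for all $t$ — this is essentially automatic and uniform in $\varepsilon$. (If one needs $\int|u_\varepsilon|\,dx$ rather than $\int u_\varepsilon\,dx$, one multiplies \eqref{RHGPP} by $\mathrm{sgn}_\delta(u_\varepsilon)$, a smooth approximation of the sign, and passes to the limit.) Finally, item (5): $h_\varepsilon=\phi_\varepsilon(u_\varepsilon)\nabla\Delta^{m-1}u_\varepsilon$, so $|h_\varepsilon|^2 = \phi_\varepsilon(u_\varepsilon)^2|\nabla\Delta^{m-1}u_\varepsilon|^2 \le \|\phi_\varepsilon\|_\infty\,\phi_\varepsilon(u_\varepsilon)|\nabla\Delta^{m-1}u_\varepsilon|^2$, and since $f$ is bounded, $\phi_\varepsilon(u_\varepsilon)=f^n(\varepsilon)+(1-\varepsilon)f^n((\varepsilon^2+u_\varepsilon^2)^{1/2})$ is bounded uniformly in $\varepsilon$ (using $\varepsilon\le 1$); thus $\|h_\varepsilon\|_{L^2(\Omega\times(0,t))}^2 \le \|\phi_\varepsilon\|_\infty \cdot C(u_0) \le K$ by (a).

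The main obstacle, and the point requiring the most care, is the rigorous justification of the integrations by parts and the vanishing of the boundary terms. On all of $\mathbb{R}^N$ one relies on the exponential decay of $u_\varepsilon$ and its derivatives (inherited from the weighted $L^2_\rho$ framework and classical parabolic theory, \cite{Eid,Fri}); on the bounded support section one invokes the free boundary conditions \eqref{boundary}, but these hold for the \emph{degenerate} limit, so for the regularized problem one must argue that the contribution of the boundary layer vanishes as $\varepsilon\to 0^+$, or work on an expanding family of smooth domains and control the flux there. A secondary delicate point is that the constant $K$ must be shown independent of both $\varepsilon$ and $T$: independence of $T$ is clear because the energy identity gives a bound by the initial energy $\int_\Omega|\nabla\Delta^{(m-1)/2}u_0|^2$ (finite since $u_0$ is smooth and compactly supported) with no growing term; independence of $\varepsilon$ hinges on the uniform upper bound for $\phi_\varepsilon$ and the uniform lower bound $\phi_\varepsilon\ge f^n(\varepsilon)\ge 0$ — only the upper bound is needed for these particular estimates, which is why they go through cleanly even though $\phi_\varepsilon$ degenerates.
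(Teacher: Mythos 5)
Your proposal follows essentially the same route as the paper: the energy identity obtained by testing \eqref{RHGPP} with $\Delta^{m-1}u_{\varepsilon}$ yields items (1), (2) and the dissipation bound $\iint\phi_{\varepsilon}(u_{\varepsilon})|\nabla\Delta^{m-1}u_{\varepsilon}|^2\leq K$, mass conservation gives (4), and the uniform upper bound $\phi_{\varepsilon}\leq 2C_f^n$ combined with the dissipation bound gives (5), exactly as in the paper. The only cosmetic differences are that for item (3) the paper applies Poincar\'e's inequality to $\Delta^{\frac{m-2}{2}}u_{\varepsilon}$ (which vanishes on the free boundary) rather than a Gagliardo--Nirenberg interpolation, and that the paper realizes the time derivative in the energy identity via a difference-quotient argument rather than differentiating under the integral sign directly.
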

\begin{proof}
First we note that, thanks to the boundary conditions \eqref{boundary},
\begin{equation*}
-\int_{\Omega}u_{\varepsilon}(x,\cdot)\Delta^{m-1}u_{\varepsilon}(x,\cdot)dx=\left\{
        \begin{tabular}{lr}
        $\displaystyle(-1)^m\int_{\Omega}|\Delta^{\frac{m-1}{2}}u_{\varepsilon}(x,\cdot)|^2dx$ & if $m=2j+1$, \\
				&\\
				$\displaystyle(-1)^m\int_{\Omega}|\nabla\Delta^{\frac{m-2}{2}}u_{\varepsilon}(x,\cdot)|^2dx$ & if $m=2j$,
        \end{tabular}
        \right.
\end{equation*}
for $j\in\mathbb{N}$, as well as
\begin{equation*}
\int_{\Omega}u_{\varepsilon}(x,t+h)\Delta^{m-1}u_{\varepsilon}(x,t)dx=
\int_{\Omega}u_{\varepsilon}(x,t)\Delta^{m-1}u_{\varepsilon}(x,t+h)dx.
\end{equation*}
Hence,
\begin{align*}
-\int_{\Omega}&[\Delta^{m-1}u_{\varepsilon}(x,t+h)+\Delta^{m-1}u_{\varepsilon}(x,t)][u_{\varepsilon}(x,t+h)-u_{\varepsilon}(x,t)]dx=\\
&=\left\{ \begin{tabular}{lr}
        $\displaystyle(-1)^m\int_{\Omega}|\Delta^{\frac{m-1}{2}}u_{\varepsilon}(x,t+h)|^2-|\Delta^{\frac{m-1}{2}}u_{\varepsilon}(x,t)|^2dx$ & if $m=2j+1$, \\
				&\\
				$\displaystyle(-1)^m\int_{\Omega}|\nabla\Delta^{\frac{m-2}{2}}u_{\varepsilon}(x,t+h)|^2-|\nabla\Delta^{\frac{m-2}{2}}u_{\varepsilon}(x,t)|^2dx$ & if $m=2j$.
        \end{tabular}
        \right.
\end{align*}
Then, dividing by $h$, taking the limit as $h\to0^+$ and integrating between $0$ and $t\in[0,T]$ we get
\begin{align}\label{id1}
-\iint\limits_{\Omega\times(0,t)}&\Delta^{m-1}u_{\varepsilon}(x,t)u_{\varepsilon,t}(x,t)\, dx\,dt\notag\\
&=\left\{ \begin{tabular}{lr}
        $\displaystyle\frac{(-1)^m}{2}\int_{\Omega}|\Delta^{\frac{m-1}{2}}u_{\varepsilon}(x,t)|^2-|\Delta^{\frac{m-1}{2}}u_{\varepsilon}(x,0)|^2dx$ & if $m=2j+1$, \\
				&\\
				$\displaystyle\frac{(-1)^m}{2}\int_{\Omega}|\nabla\Delta^{\frac{m-2}{2}}u_{\varepsilon}(x,t)|^2-|\nabla\Delta^{\frac{m-2}{2}}u_{\varepsilon}(x,0)|^2dx$ & if $m=2j$.
        \end{tabular}
        \right.
\end{align}
Now, multiplying the regularized equation \eqref{RHGPP} by $\Delta^{m-1}u_{\varepsilon}$, integrating by parts in $\Omega\times(0,t)$ and using the boundary conditions, we obtain
\begin{equation}\label{id2}
\iint\limits_{\Omega\times(0,t)}\mkern-5mu \nabla\cdot(\phi_{\varepsilon}(u_{\varepsilon})\nabla\Delta^{m-1}u_{\varepsilon})\Delta^{m-1}u_{\varepsilon}\, dx\,dt
=\iint\limits_{\Omega\times(0,t)}\mkern-5mu\phi_{\varepsilon}(u_{\varepsilon})|\nabla\Delta^{m-1}u_{\varepsilon}|^2\, dx\,dt.
\end{equation}
Therefore, from \eqref{id1} and \eqref{id2}, we conclude 
\begin{equation*}
\int_{\Omega}|\Delta^{\frac{m-1}{2}}u_{\varepsilon}(x,0)|^2dx=\int_{\Omega}|\Delta^{\frac{m-1}{2}}u_{\varepsilon}(x,t)|^2 \, dx\,dt+2\iint\limits_{\Omega\times(0,t)}\mkern-5mu\phi_{\varepsilon}(u_{\varepsilon})|\nabla\Delta^{m-1}u_{\varepsilon}|^2\, dx\,dt
\end{equation*}
if $m=2j+1$, and 
\begin{equation*}
\int_{\Omega}|\nabla\Delta^{\frac{m-2}{2}}u_{\varepsilon}(x,0)|^2dx\!=\!\int_{\Omega}|\nabla\Delta^{\frac{m-2}{2}}u_{\varepsilon}(x,t)|^2 \, dx\,dt
+2\iint\limits_{\Omega\times(0,t)}\mkern-5mu\phi_{\varepsilon}(u_{\varepsilon})|\nabla\Delta^{m-1}u_{\varepsilon}|^2\, dx\,dt,
\end{equation*}
if $m=2j$. Consequently, due to these Bernis-Friedman-type inequalities we have proved assertions $(1)$ and $(2)$. Let us observe that from the above integral equalities we also get
\begin{equation*}
\iint\limits_{\Omega\times(0,t)}\phi_{\varepsilon}(u_{\varepsilon})|\nabla\Delta^{m-1}u_{\varepsilon}|^2\,dx\,dt\leq K,
\end{equation*}
and, therefore, 
\begin{equation}\label{cota_m}
f^n(\varepsilon)\!\iint\limits_{\Omega\times(0,t)}|\nabla\Delta^{m-1}u_{\varepsilon}|^2\,dx\,dt\leq K,
\end{equation}
\begin{equation}\label{cota_m2}
\iint\limits_{\Omega\times(0,t)}\!\!f^n\left((\varepsilon^2+u_{\varepsilon}^2)^{1/2}\right)|\nabla\Delta^{m-1}u_{\varepsilon}|^2\,dx\,dt\leq K,
\end{equation}

with $K$ as a positive constant.
For unbounded domains such as $\Omega=\mathbb{R}^N$ the results are true thanks to the exponential decay of solutions (so that the integration by parts is justified). 
Thus, the inequalities remain true in certain $L_{\rho}^2(\mathbb{R}^N)$ and $H_{\rho}^{2m}(\mathbb{R}^N)$ weighted spaces for an appropriate weight. Moreover, from the conservation of mass and the boundary conditions \eqref{boundary}, it also follows that
\begin{equation*}
\int_{\Omega}u_{\varepsilon}(x,t)dx\leq K,\ \forall t\in[0,T].
\end{equation*}
On the other hand, applying Poincar\'e's 
inequality in the case $m=2j$ (assuming a bounded domain $\Omega$) we find
\begin{equation*}
\int_{\Omega}|\Delta^{\frac{m-2}{2}}u(x,t)|^2\,dx\,dt\leq K,
\end{equation*}
and we conclude $(3)$. Finally, we prove $(5)$. Since $f$ is a bounded function, i.e. $\displaystyle\sup\limits_{t\in\mathbb{R}_+}f(t)\leq C_f$, it follows that 
\begin{align*}
\iint\limits_{\Omega\times(0,t)}|h_{\varepsilon}|^2dxdt&=\iint\limits_{\Omega\times(0,t)}\phi_{\varepsilon}^2(u_{\varepsilon})|\nabla\Delta^{m-1}u_{\varepsilon}|^2\,dx\,dt\\
&=\iint\limits_{\Omega\times(0,t)}\left(f^n(\varepsilon)+(1-\varepsilon)f^n\left((\varepsilon^2+u_{\varepsilon}^2)^{1/2}\right)\right)^2|\nabla\Delta^{m-1}u_{\varepsilon}|^2\,dx\,dt\\
&\leq 2f^{2n}(\varepsilon)\iint\limits_{\Omega\times(0,t)}\!\!\!|\nabla\Delta^{m-1}u_{\varepsilon}|^2\,dx\,dt\\
&+2C_f^n\iint\limits_{\Omega\times(0,t)}f^n\left((\varepsilon^2+u_{\varepsilon})^{1/2}\right)|\nabla\Delta^{m-1}u_{\varepsilon}|^2\,dx\,dt\\
&\leq2KC_f^n.
\end{align*}
\end{proof}
Additionally, we obtain uniform $L^{\infty}$ estimates for solutions to \eqref{HGPP} by means of a scaling technique, \cite{Gid}.
\begin{proposition}
Any solution to problem \eqref{HGPP} is uniformly bounded.
\end{proposition}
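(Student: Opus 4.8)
\noindent The plan is to prove a bound $\sup_{\varepsilon\in(0,1]}\|u_\varepsilon\|_{L^\infty(\Omega\times[0,T])}\le K$ for the regularized family; the bound for a \emph{proper} solution of \eqref{HGPP} then follows by passing to the limit in \eqref{limit}. I would argue by contradiction, via a blow-up (rescaling) argument of the type in \cite{Gid}. If no such bound exists, there are $\varepsilon_k\in(0,1]$ with $M_k:=\sup_{\Omega\times[0,T]}|u_{\varepsilon_k}|\to\infty$ and points $(x_k,t_k)$ with $|u_{\varepsilon_k}(x_k,t_k)|\ge M_k/2$. Since $u_0$ is bounded and $\phi_\varepsilon$ is uniformly bounded above by $C_f^n$, the regularized problems \eqref{RHGPP} admit a short-time $L^\infty$ bound uniform in $\varepsilon$, so one may assume $t_k\ge\tau>0$ for a fixed $\tau$ (otherwise $\sup|u_{\varepsilon_k}|$ would stay bounded near $t=0$).

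Next I would perform the amplitude rescaling
\[
v_k(y,s):=M_k^{-1}u_{\varepsilon_k}(x_k+y,\,t_k+s),
\]
so that $\|v_k\|_{L^\infty}\le1$, $|v_k(0,0)|\ge 1/2$, and $v_k$ solves an equation of exactly the same form,
\[
\partial_s v_k=(-1)^{m-1}\nabla\cdot\big(\phi_{\varepsilon_k}(M_k v_k)\,\nabla\Delta^{m-1}v_k\big),
\]
on $(\Omega-x_k)\times(-\tau,\,T-t_k]$. The key observation is that every estimate in Proposition \ref{ber_estimate} is homogeneous of negative degree in $M_k$ under this amplitude rescaling, so that after dividing by $M_k\to\infty$ they turn into quantitative smallness statements for $v_k$: from items $(1)$--$(3)$, together with iterated Poincar\'e's inequality on the bounded section $\Omega-x_k$ and the zero boundary conditions \eqref{boundary} (or the weighted $L_\rho^2$--estimates of Section 3 when $\Omega=\mathbb{R}^N$), one gets $\sup_s\|v_k(\cdot,s)\|_{L^2}\to0$ and $\sup_s\|D^jv_k(\cdot,s)\|_{L^2}\to0$ for $1\le j\le m-1$, and from item $(5)$ and \eqref{cota_m2} the rescaled flux $\phi_{\varepsilon_k}(M_kv_k)\nabla\Delta^{m-1}v_k$ tends to $0$ in $L^2((\Omega-x_k)\times(-\tau,0))$; in particular $v_k\to0$ in $L^2_{\mathrm{loc}}$.

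It then remains to upgrade this to pointwise convergence at the origin. On a fixed parabolic neighbourhood of $(0,0)$ the argument of the coefficient, $M_k|v_k|$, is of size $\simeq M_k\to\infty$, so $\phi_{\varepsilon_k}(M_kv_k)=f^n(\varepsilon_k)+(1-\varepsilon_k)f^n\big((\varepsilon_k^2+M_k^2v_k^2)^{1/2}\big)$ is squeezed between two positive constants independent of $k$ and of $\varepsilon_k$ (since $f$ is continuous, positive and increasing, $f(M_k/C)$ converges to $\sup f\in(0,C_f]$); the equation for $v_k$ is therefore uniformly parabolic there, with coefficients analytic in $v_k$, and interior parabolic Schauder and $L^p$ estimates (\cite{Eid,Fri}) yield uniform local bounds on $v_k$ in $C^{2m+\alpha,\,1+\alpha/(2m)}$. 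Extracting a subsequence, $v_k\to v$ in $C^{2m,1}_{\mathrm{loc}}$; by the previous step $v\equiv0$, whence $v_k(0,0)\to v(0,0)=0$, contradicting $|v_k(0,0)|\ge 1/2$.

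The step I expect to be the real obstacle is the one I glossed as ``$M_k|v_k|\simeq M_k$ on a fixed neighbourhood of $(x_k,t_k)$'': a priori the near-maximum could be a thin spike, so to secure uniform parabolicity on a cylinder of fixed size one should first relocate $(x_k,t_k)$ by a doubling/covering argument (of the Pol\'acik type, adapted to the $2m$-parabolic scaling), and perhaps replace the amplitude rescaling by a genuine space--time rescaling $y\mapsto x_k+\lambda_k y$, $s\mapsto t_k+\lambda_k^{2m}s$ with $\lambda_k\to0$ slow enough that the $L^2$-type bounds above still collapse; the subtle point is that the lower bound for $\phi_{\varepsilon_k}(M_kv_k)$ on the resulting cylinder must be uniform in the two parameters $k$ and $\varepsilon_k$ simultaneously, which is delicate precisely because $\phi_\varepsilon\to f^n(|u|)$ degenerates on the zero set of $u$ as $\varepsilon\to0^+$. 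Once that uniform local parabolicity is established, the remaining steps are routine.
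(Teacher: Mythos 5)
Your argument is correct in outline and follows the same blow-up-by-contradiction strategy as the paper, but the rescaling and the concluding mechanism are genuinely different, so a comparison is worth recording. The paper rescales in amplitude \emph{and} space--time, $v_k(y,s)=C_k^{-1}u(\lambda_k y+x_k,\lambda_k^{2m}s+t_k)$ with $\lambda_k\to0$, so that the Bernis--Friedman bounds of Proposition \ref{ber_estimate} collapse and the limit $v$ solves the elliptic problem $\Delta^{\widetilde m}v=0$ on all of $\mathbb{R}^N$ with $|v|\le1$ and decay at infinity; the contradiction then comes from a Liouville theorem for polyharmonic functions \cite{Arm,Lig}. You rescale only in amplitude, keep the section fixed, and replace the Liouville input by iterated Poincar\'e on the bounded section (using the boundary conditions \eqref{boundary}) to convert the collapsing higher-order $L^2$ bounds, which after division scale like $K/M_k^2$, into $\|v_k\|_{L^2}\to0$ directly, finishing with interior parabolic regularity near the near-maximum point to contradict $|v_k(0,0)|\ge 1/2$. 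Your variant buys two things: it avoids having to choose $\lambda_k$ relative to $C_k$ and to track constants on the expanding domains $\Omega_k$ (a point the paper leaves vague --- the exponent $\lambda_k^{N+2(m-1)}$ it writes does not match a direct scaling computation when $N>2(m-1)$, and $\lambda_k$ is never actually specified), and it states the bound for the regularized family uniformly in $\varepsilon$ before passing to the limit, which is the more honest formulation since Proposition \ref{ber_estimate} is stated for $u_\varepsilon$; the price is that Poincar\'e on the section is only clean for bounded supports, whereas the Liouville route needs no Poincar\'e constant at all. Finally, the obstacle you flag --- securing uniform parabolicity of the rescaled equation on a cylinder of \emph{fixed} size around the near-maximum, i.e.\ ruling out a thin spike so that $f^n(M_k|v_k|)$ is bounded below there --- is real, but it is equally present and unaddressed in the paper's own proof, which simply asserts that ``by interior parabolic regularity, $v(y,0)$ must be non-trivial in a neighbourhood of $y=0$''; your proposed doubling/covering repair would be needed to make either version fully rigorous.
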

\begin{proof}
We argue by contradiction. Assume that there exists a monotone sequence $\{t_k\}\to T$ and $\{x_k\}\subset\mathbb{R}^N$ such that 
\begin{equation}
\label{ass}
\sup\limits_{(x,t)\in\mathbb{R}^N\times(0,t_k)}|u(x,t)|=|u(x_k,t_k)|=C_k\to+\infty\quad\mbox{monotonically}.
\end{equation}
Subsequently, we rescale the solution $u(x,t)$ to \eqref{HGPP} and define the sequence $\{v_k(y,s)\}$ as follows,
\begin{equation*}
v_k(y,s):=\frac{1}{C_k}u\left(\lambda_ky+x_k,\lambda_k^{2m}s+t_k\right),
\end{equation*}
for some positive number $\lambda_k$ (to be specified later) such that $\{\lambda_k\}\to0$. Thus, with this rescaling we just perform a zoom around the point 
$(x_k,t_k)$ in the region $B_{\delta/\lambda_k}(0)\times \left(\frac{-t_k}{\lambda_k^{2m}},0\right)$, for $\delta>0$ sufficiently small and where $B_{\delta/\lambda_k}(0)$ is the ball of radius $\frac{\delta}{\lambda_k}$ and centered at the origin.
Therefore, due to the scaling and assumption \eqref{ass} it is now clear that, 
\begin{equation}\label{acotado}
|v_k(0,0)|=1\quad\mbox{and}\quad |v_k(y,s)|\leq1,\quad\mbox{for all }k\geq1\mbox{ and }s\in\left[-\frac{t_k}{\lambda_k^{2m}},0\right).
\end{equation}
Moreover, the function $v_k$ satisfies the equation
\begin{equation}\label{eq:rescale}
\frac{\partial}{\partial s}v_k=(-1)^{m-1}\nabla\cdot(f^n(|C_kv_k|)\nabla\Delta^{m-1}v_k),
\end{equation}
for any $(y,s)\in\mathbb{R}^N\times(-\frac{t_k}{\lambda_k^{2m}},0)$ with initial data $\displaystyle v_{k0}(y)=\frac{1}{C_k}u_0\left(\lambda_ky+x_k\right)$. On the other hand, thanks to the uniform estimate $(1)$ in Proposition \ref{ber_estimate}, for a positive constant $K$, we obtain
\begin{equation*}
\int_{\Omega}|\Delta^{\frac{m-1}{2}}u(x,t)|^2dx=\frac{C_k^2}{\lambda_k^{N+2(m-1)}}\int_{\Omega_k}|\Delta^{\frac{m-1}{2}}v_{k}(y,s)|^2dy\leq K,
\end{equation*}
so that
\begin{equation*}
\int_{\Omega_k}|\Delta^{\frac{m-1}{2}}v_{k}(y,s)|^2dy\leq \frac{\lambda_k^{N+2(m-1)}}{C_k^2}K,
\end{equation*}
if $m=2j+1$. In a similar way, if $m=2j$, from $(2)$ in Proposition \ref{ber_estimate} we find,
\begin{equation*}
\int_{\Omega_k}|\nabla\Delta^{\frac{m-2}{2}}v_{k}(y,s)|^2dy\leq \frac{\lambda_k^{N+2(m-1)}}{C_k^2}K.
\end{equation*}
Moreover, using $(3)$ in Proposition \ref{ber_estimate}, 
\begin{equation*}
\int_{\Omega_k}|\Delta^{\frac{m-2}{2}}v_{k}(y,s)|^2dy\leq \frac{\lambda_k^{N+2(m-1)}}{C_k^2}K_1.
\end{equation*}

Hence, passing to the limit as $k\to\infty$, along a subsequence if necessary, the limit function $v_k\to v(y,s)$ satisfies, 
\begin{equation}\label{cota}
\int_{\mathbb{R}^N}|\Delta^{\frac{m-1}{2}}v(y,s)|^2dy=0\quad\mbox{if }m=2j+1,
\end{equation}
and
\begin{equation}\label{cota1}
\int_{\mathbb{R}^N}|\Delta^{\frac{m-2}{2}}v(y,s)|^2dy=0\quad\mbox{if }m=2j.
\end{equation}
Therefore, passing to the limit and using \eqref{cota} and \eqref{cota1} together with the boundary conditions \eqref{boundary}, we find that the limit function satisfies
\begin{equation*}
\left\{
\begin{tabular}{rll}
$\Delta^{\widetilde{m}}v$&$\!\!\!\!=0$ & in $\mathbb{R}^N$, \\
$|v|$&$\!\!\!\!\leq1$,&\\
$\lim\limits_{|y|\to\infty}v(y,\cdot)$&$\!\!\!\!=0$. &
\end{tabular}
\right.
\end{equation*}
with $\widetilde{m}=\frac{m-1}{2}$ if $m=2j+1$ and $\widetilde{m}=\frac{m-2}{2}$.
Therefore, because of a Liouville-type Theorem, see \cite{Arm,Lig}, we obtain that $v$ has to be constant, and due to the condition at infinity we conclude that $v\equiv0$ in contradiction with \eqref{acotado}.
 Consequently, we conclude, from the construction of the functions $v_k$ and the limiting argument 
performed above that $v\equiv0$ in contradiction with \eqref{acotado}. Actually, \eqref{acotado} implies, by interior parabolic regularity, that $v(y,0)$ must be 
non-trivial in a neighbourhood of $y=0$. Then, this fact simply means that problem \eqref{HGPP} does not have an internal mechanism to support infinite growth (or blow-up) solutions. 
\end{proof}
\section{Homotopy deformations}
Next we show the existence of solutions to the Cauchy Problem \eqref{HGPP} using a limiting argument as
\begin{itemize}
\item $\varepsilon\to0^+$, obtaining the convergence of solutions to the regularized problem \eqref{RHGPP} to solutions to problem \eqref{HGPP}.
\item $\varepsilon\to0^+$ and $n=n(\varepsilon)\to0^+$, obtaining the convergence of solutions to problem \eqref{HGPP} to solutions to the polyharmonic heat equation \eqref{PHE} under some conditions on the behavior of $n(\varepsilon)$ for $\varepsilon\approx 0$.
\end{itemize}
As the former procedure is unable to distinguish proper solutions to the Cauchy problem \eqref{HGPP} from solutions to the FBP we perform a second homotopic argument as 
\begin{itemize}
\item  $n\to0^+$ and $\varepsilon=\varepsilon(n)\to0^+$ as $n\to0^+$.
\end{itemize}
First we recall the following Lemma due to Aubin and Lions, see \cite{Aub}.
\begin{lemma}\label{AL}
Let $X_0\subseteq X\subset X_1$ be three Banach spaces such that $X_0$ is compactly embedded in $X$ and $X$ is continuously embedded in $X_1$. For $1\leq p,q\leq\infty$, let 
$$W=\{u\in L^p([0,T],X_0),\,u_t\in L^q([0,T],X_1)\}.$$
\begin{itemize}
\item If $p<\infty$ then the embedding of $W$ into $L^p([0,T],X)$ is compact.
\item If $p=\infty$ and $q>1$ then the embedding of $W$ into $C([0,T],X)$ is compact.
\end{itemize}
\end{lemma}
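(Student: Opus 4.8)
The plan is to prove this classical Aubin--Lions--Simon compactness result by separating the \emph{spatial} gain of compactness, which comes from the compact embedding of $X_0$ into $X$, from the \emph{temporal} regularity, which comes from the bound on $u_t$ in $L^q([0,T],X_1)$. The single tool that links the three spaces is Ehrling's interpolation inequality: since $X_0$ is compactly embedded in $X$ and $X$ continuously in $X_1$, for every $\eta>0$ there is a constant $C_\eta>0$ with
\[
\|v\|_X\le \eta\|v\|_{X_0}+C_\eta\|v\|_{X_1}\qquad\text{for all }v\in X_0.
\]
I would first prove this by contradiction: were it false, a normalized sequence $v_k$ with $\|v_k\|_{X_0}=1$ would satisfy $\|v_k\|_X>\eta+k\|v_k\|_{X_1}$; compactness of the embedding $X_0\hookrightarrow\hookrightarrow X$ gives $v_k\to v$ in $X$ along a subsequence, while $\|v_k\|_{X_1}\to0$ forces $v=0$ in $X_1$ and hence in $X$, contradicting $\|v\|_X\ge\eta$. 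Applied pointwise in $t$ and then integrated (resp. taken to the supremum), Ehrling reduces the whole statement to control in the weaker space $X_1$: for a sequence $(u_k)$ bounded in $W$ by $M$,
\[
\|u_j-u_k\|_{L^p([0,T],X)}\le 2M\eta+C_\eta\|u_j-u_k\|_{L^p([0,T],X_1)},
\]
so once a subsequence is Cauchy in $L^p([0,T],X_1)$, letting $\eta\to0$ makes it Cauchy in $L^p([0,T],X)$.

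For the first assertion ($p<\infty$) the remaining task is to extract a subsequence converging in $L^p([0,T],X_1)$. I would introduce the time averages
\[
u_k^h(t)=\frac{1}{h}\int_t^{t+h}u_k(s)\,ds,\qquad h>0,
\]
and argue in two steps. First, for fixed $h$ the family $\{u_k^h\}_k$ is relatively compact in $C([0,T],X_1)$ by the vector-valued Arzel\`a--Ascoli theorem: each $u_k^h(t)$ stays in a bounded subset of $X_0$, hence in a relatively compact subset of $X_1$ (the embedding $X_0\hookrightarrow\hookrightarrow X\hookrightarrow X_1$ being compact), while the bound on $u_{k,t}$ in $L^q([0,T],X_1)$ makes $t\mapsto u_k^h(t)$ equicontinuous into $X_1$. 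Second, writing $u_k^h(t)-u_k(t)=\frac1h\int_t^{t+h}\!\int_t^s u_{k,\tau}(\tau)\,d\tau\,ds$ and estimating in $X_1$ gives $\|u_k-u_k^h\|_{L^p([0,T],X_1)}\to0$ as $h\to0$, \emph{uniformly in} $k$, by the uniform bound on $u_{k,t}$. A diagonal extraction over $h=1/\ell$ then produces a subsequence that is Cauchy in $L^p([0,T],X_1)$, which by the previous paragraph settles the case $p<\infty$.

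For the second assertion ($p=\infty$, $q>1$) the hypothesis $q>1$ is exactly what upgrades the conclusion to $C([0,T],X)$. Here H\"older's inequality gives a \emph{uniform} modulus of continuity in time,
\[
\|u_k(t_1)-u_k(t_2)\|_{X_1}\le\Big(\int_{t_1}^{t_2}\|u_{k,t}\|_{X_1}^q\,dt\Big)^{1/q}|t_1-t_2|^{1-1/q}\le C\,|t_1-t_2|^{1-1/q},
\]
so $\{u_k\}$ is equicontinuous into $X_1$, and each slice $\{u_k(t)\}_k$ is relatively compact in $X$ since it is bounded in $X_0$. Applying Ehrling to the differences $u_k(t_1)-u_k(t_2)$ transfers the equicontinuity from $X_1$ to $X$ (choose $\eta$ to absorb the bounded $X_0$-term into $\varepsilon/2$, then take $|t_1-t_2|$ small to control the $X_1$-term), after which the vector-valued Arzel\`a--Ascoli theorem with target $X$ yields relative compactness in $C([0,T],X)$.

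The main obstacle is the decoupling step carried out through the time averages: one must verify that $u_k^h\to u_k$ in $L^p([0,T],X_1)$ \emph{uniformly in} $k$, which rests on the derivative bound together with a careful H\"older estimate in the time variable, and then combine it with the fixed-$h$ Arzel\`a--Ascoli compactness through a diagonal argument. Pinning down precisely where the hypotheses $p<\infty$ (for the $L^p$ conclusion, via dominated convergence in time) and $q>1$ (for the uniform H\"older modulus underlying the $C([0,T],X)$ conclusion) enter is the delicate part; everything else is a bookkeeping assembly of Ehrling's inequality and Arzel\`a--Ascoli. I would note that the result is classical, due to Aubin and Lions \cite{Aub} and subsequently sharpened by Simon, so that for the purposes of this paper it suffices to invoke it.
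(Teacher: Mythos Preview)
The paper does not prove this lemma at all; it simply recalls it as a classical result with a citation to Aubin \cite{Aub}. Your final remark already acknowledges this, and indeed for the purposes of this paper invoking the result is all that is required. The proof outline you supply is the standard one (Ehrling's inequality to reduce to $X_1$, time-averages plus Arzel\`a--Ascoli and a diagonal argument for the $L^p$ case, and a H\"older modulus of continuity for the $C([0,T],X)$ case when $q>1$), so there is nothing to compare: you have added a correct sketch where the paper gives none.
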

First, for bounded domains $\Omega$ and due to 
Proposition \ref{ber_estimate} together with Lemma \ref{AL} we can extract a convergent subsequence in $L^2(\Omega\times[0,T])$ as $\varepsilon\to0^+$ so that 
\begin{equation*}
u_{\varepsilon}(x,t)\to u(x,t),\quad \mbox{in}\ L^2(\Omega\times[0,T]),\quad\mbox{as}\ \varepsilon\to0^+,
\end{equation*}
with $u(x,t)$ a solution of \eqref{HGPP}. Thereby, the convergence is strong in $L^2(\Omega\times[0,T])$. In the whole space $\mathbb{R}^N$ we use the appropriate $L_{\rho}^2(\mathbb{R}^N)$ and $H_{\rho}^{2m}(\mathbb{R}^N)$ weighted spaces. Note that the difficult issue, that we do not overcome at this stage, is whether the limit depends on the taken subsequence, in other words, if the limit as $\varepsilon\to0^+$ provides a unique limit or many partial limits.
\begin{lemma}Let $u_{\varepsilon}(x,t)$ be the unique global solution of the regularized problem \eqref{RHGPP}, then
\begin{equation*}
||u_{\varepsilon}(\cdot,t)-u(x,\cdot) ||_{L^2(\Omega\times(0,t)}\rightarrow0,\ \mbox{as}\ \varepsilon\to0^+,
\end{equation*}
with $u(x,t)$ a solution of \eqref{HGPP}, i.e.,
\begin{equation*}
\iint_{\Omega\times(0,t)}\varphi_tudxdt+(-1)^m\iint_{\Omega\times(0,t)}\nabla\varphi\left(f^n(u)\nabla\Delta^{m-1}u\right)\,dx\,dt=0,
\end{equation*}
for all $\varphi\in C_0^{\infty}(\Omega\times(0,t)),t\in[0,T].$
\end{lemma}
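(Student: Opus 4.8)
The plan is to combine the uniform \emph{a priori} bounds of Proposition~\ref{ber_estimate} with the Aubin--Lions compactness Lemma~\ref{AL} to extract a subsequence along which $u_\varepsilon$ converges strongly in $L^2$, and then to pass to the limit in the weak formulation of the regularized equation \eqref{RHGPP}, the only genuinely delicate point being the identification of the limit of the degenerate flux.

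First I would collect the compactness. By assertions $(1)$--$(3)$ of Proposition~\ref{ber_estimate} (together with the conservation of mass and Poincar\'e's inequality), $\{u_\varepsilon\}$ is bounded, uniformly in $\varepsilon\in(0,1]$, in $L^\infty(0,T;H^{1}(\Omega))$ — on $\mathbb{R}^N$ in the weighted space $H^{2m}_\rho$, the exponential decay of $u_\varepsilon$ legitimising the integrations by parts. Writing the equation as $u_{\varepsilon,t}=(-1)^{m-1}\nabla\cdot h_\varepsilon$ with $h_\varepsilon=\phi_\varepsilon(u_\varepsilon)\nabla\Delta^{m-1}u_\varepsilon$, assertion $(5)$ gives $\|h_\varepsilon\|_{L^2(\Omega\times(0,T))}\le K$, whence $\|u_{\varepsilon,t}\|_{L^2(0,T;H^{-1}(\Omega))}\le K$. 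Applying Lemma~\ref{AL} with $X_0=H^1(\Omega)\hookrightarrow\hookrightarrow X=L^2(\Omega)\hookrightarrow X_1=H^{-1}(\Omega)$ and $p=2$ produces a subsequence (not relabelled) with $u_\varepsilon\to u$ strongly in $L^2(\Omega\times(0,t))$ and a.e., $u_\varepsilon\rightharpoonup u$ weakly in $L^2(0,T;X_0)$, and $h_\varepsilon\rightharpoonup h$ weakly in $L^2(\Omega\times(0,t))$ for some $h$; moreover, from \eqref{id1}--\eqref{id2} one has $\iint\phi_\varepsilon(u_\varepsilon)|\nabla\Delta^{m-1}u_\varepsilon|^2\le K$, so $G_\varepsilon:=\phi_\varepsilon(u_\varepsilon)^{1/2}\nabla\Delta^{m-1}u_\varepsilon$ is bounded in $L^2$ and $G_\varepsilon\rightharpoonup G$ weakly in $L^2$. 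Multiplying \eqref{RHGPP} by $\varphi\in C_0^\infty(\Omega\times(0,t))$ and integrating by parts in $x$ and $t$ gives $\iint\varphi_t u_\varepsilon\,dx\,dt+(-1)^m\iint\nabla\varphi\cdot h_\varepsilon\,dx\,dt=0$; the first integral passes to the limit by strong $L^2$ convergence of $u_\varepsilon$, the second by weak $L^2$ convergence of $h_\varepsilon$, yielding $\iint\varphi_t u+(-1)^m\iint\nabla\varphi\cdot h=0$.

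It remains to show $h=f^n(|u|)\nabla\Delta^{m-1}u$, which I expect to be the main obstacle since this is exactly where the degeneracy concentrates. Because $\{u_\varepsilon\}$ is uniformly bounded, the values $\phi_\varepsilon(u_\varepsilon)$ lie in a fixed compact set; as $\phi_\varepsilon\to\phi_0=f^n(|\cdot|)$ uniformly on compacta and $u_\varepsilon\to u$ a.e., dominated convergence gives $\phi_\varepsilon(u_\varepsilon)\to f^n(|u|)$ and $\phi_\varepsilon(u_\varepsilon)^{1/2}\to f^{n/2}(|u|)$ strongly in every $L^q$, $q<\infty$. On the set $\{|u|>0\}$ the limit $f^{n/2}(|u|)$ is locally bounded away from $0$ (since $f$ is continuous and positive off the origin), so $\nabla\Delta^{m-1}u_\varepsilon=\phi_\varepsilon(u_\varepsilon)^{-1/2}G_\varepsilon$ is bounded in $L^2_{\mathrm{loc}}(\{|u|>0\})$ and its weak limit must coincide with its distributional limit $\nabla\Delta^{m-1}u$; passing to the limit in $h_\varepsilon=\phi_\varepsilon(u_\varepsilon)^{1/2}G_\varepsilon$ then gives $h=f^n(|u|)\nabla\Delta^{m-1}u$ there. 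On $\{u=0\}$ I would argue that $h=0$: since $\phi_\varepsilon(0)\to0$, one has $\phi_\varepsilon(u_\varepsilon)^{-1}\to+\infty$ a.e. on $\{u=0\}$, while $\iint\phi_\varepsilon(u_\varepsilon)^{-1}|h_\varepsilon|^2=\iint\phi_\varepsilon(u_\varepsilon)|\nabla\Delta^{m-1}u_\varepsilon|^2\le K$; a weak lower-semicontinuity argument of Bernis--Friedman type then forces $\int_{\{u=0\}}|h|^2\cdot(+\infty)\le K$, i.e. $h=0$ a.e. on $\{u=0\}$, which is consistent with the zero-contact-angle conditions \eqref{boundary} ($\nabla\Delta^{m-1}u=0$ there too). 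Hence $h=f^n(|u|)\nabla\Delta^{m-1}u$ a.e., and substituting into the limiting identity yields precisely the stated weak formulation.

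On $\mathbb{R}^N$ the same reasoning is carried out in the weighted $L^2_\rho$/$H^{2m}_\rho$ topology, the exponential decay supplying the needed tightness. I would also note explicitly — as the authors themselves emphasise — that the limit $u$ is obtained only along a subsequence $\varepsilon_k\to0^+$, so that a priori it may depend on the subsequence and on the chosen regularization $\phi_\varepsilon$; removing that dependence is not attempted here and is left as an open issue.
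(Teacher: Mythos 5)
Your argument is correct in substance, but it follows a genuinely different route from the paper at the key step. The paper never identifies a weak limit $h$ of the flux $h_{\varepsilon}=\phi_{\varepsilon}(u_{\varepsilon})\nabla\Delta^{m-1}u_{\varepsilon}$; instead it works entirely inside the integral identity. It first kills the additive piece $f^{n}(\varepsilon)\nabla\Delta^{m-1}u_{\varepsilon}$ using \eqref{cota_m} (which gives a contribution $O(f^{n/2}(\varepsilon))$), and then splits $\Omega\times(0,t)$ into the superlevel sets of the \emph{approximations}, $\mathcal{G}_{\varepsilon,\delta}=\{|u_{\varepsilon}|>\delta\}$ and $\mathcal{B}_{\varepsilon,\delta}=\{|u_{\varepsilon}|\le\delta\}$: on $\mathcal{G}_{\varepsilon,\delta}$ it passes to the limit by interior parabolic regularity (the equation is uniformly parabolic there), and on $\mathcal{B}_{\varepsilon,\delta}$ it shows via H\"older, the monotonicity of $f$ and \eqref{cota_m2} that the entire contribution is $O\bigl(f^{n/2}((\varepsilon^{2}+\delta^{2})^{1/2})\bigr)\to0$ — estimate \eqref{conv}. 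Your route is the classical Bernis--Friedman flux identification: weak $L^{2}$ limit $h$ of $h_{\varepsilon}$, strong-times-weak convergence on $\{|u|>0\}$, and the weighted lower-semicontinuity argument forcing $h=0$ on $\{u=0\}$. Both rest on the same energy bound $\iint\phi_{\varepsilon}(u_{\varepsilon})|\nabla\Delta^{m-1}u_{\varepsilon}|^{2}\le K$. What your version buys is a stronger structural conclusion — an a.e.\ identification of the limiting flux — and a cleaner separation of compactness from identification. What the paper's version buys is the explicit rate $f^{n/2}$, which is not cosmetic: it is exactly this rate that later dictates the admissible coupling between $n$ and $\varepsilon$ in \eqref{est11}, \eqref{est12} and \eqref{hyp} for the second homotopic limit, so your proof, while sufficient for the lemma as stated, would have to be supplemented to recover that quantitative information. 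Two points you should make explicit: (i) your step ``the weak limit coincides with the distributional limit $\nabla\Delta^{m-1}u$ on $\{|u|>0\}$'' presupposes that this set is open, i.e.\ that $u$ is continuous there; this is justified by the same interior parabolic regularity the paper invokes on $\mathcal{G}_{0,0}$, and should be stated rather than assumed; (ii) the regularization \eqref{REG1} contains the additive constant $f^{n}(\varepsilon)$, so $\phi_{\varepsilon}(u_{\varepsilon})\to f^{n}(|u|)$ only because $f^{n}(\varepsilon)\to0$ — worth a line, since it is this term that requires the separate estimate \eqref{cota_m}.
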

\begin{proof}
Multiplying equation \eqref{RHGPP} by a test function $\varphi\in C_0^{\infty}(\Omega\times(0,t))$ and integrating by parts we get
\begin{equation*}
\iint_{\Omega\times(0,t)}\varphi_tu_{\varepsilon}dxdt+(-1)^m\iint_{\Omega\times(0,t)}\nabla\varphi\left(\phi_{\varepsilon}(u)\nabla\Delta^{m-1}u_{\varepsilon}\right)\,dx\,dt=0.
\end{equation*}
Substituting $\phi_{\varepsilon}(u)=f^n(\varepsilon)+(1-\varepsilon)f^n\left((\varepsilon^2+u_{\varepsilon}^2)^{1/2}\right)$ into the latter equation we find,
\begin{align}\label{est}
&\iint_{\Omega\times(0,t)}\varphi_tu_{\varepsilon}dxdt+(-1)^mf^n(\varepsilon)\iint_{\Omega\times(0,t)}\nabla\varphi\cdot\nabla\Delta^{m-1}u_{\varepsilon}\,dx\,dt\\
&+(-1)^m(1-\varepsilon)\iint_{\Omega\times(0,t)}f^n\left((\varepsilon^2+u_{\varepsilon}^2)^{1/2}\right)\nabla\varphi\cdot\nabla\Delta^{m-1}u_{\varepsilon}\,dx\,dt=0.\nonumber
\end{align}
Now, we focus on controlling the second term in \eqref{est}. To do so, we use Proposition\,\ref{ber_estimate} together with the H\"older's inequality, and we find,
\begin{align*}
&\left|f^n(\varepsilon)\iint_{\Omega\times(0,t)}\nabla\varphi\cdot\nabla\Delta^{m-1}u_{\varepsilon}\,dx\,dt\right|\\
&\leq f^n(\varepsilon)\left(\iint_{\Omega\times(0,t)}|\nabla\Delta^{m-1}u_{\varepsilon}|^2dxdt\right)^{\frac{1}{2}}\left(\iint_{\Omega\times(0,t)}|\nabla\varphi|^2\,dx\,dt\right)^{\frac{1}{2}}\\
&\leq C f^{\frac{n}{2}}(\varepsilon)\left(f^n(\varepsilon)\iint_{\Omega\times(0,t)}|\nabla\Delta^{m-1}u_{\varepsilon}|^2\,dx\,dt\right)^{\frac{1}{2}}\\
&\leq Kf^{\frac{n}{2}}(\varepsilon)\to0,\quad \mbox{as}\ \varepsilon\to0^+
\end{align*}
with $K$ a positive constant. 
To control the third term we split the integration domain in the following sets
\begin{equation*}
\mathcal{G}_{\varepsilon,\delta}:=\{(x,t)\in\Omega\times(0,t): |u_{\varepsilon}(x,t)|>\delta>0\},
\end{equation*}
and
\begin{equation*}
\mathcal{B}_{\varepsilon,\delta}:=\{(x,t)\in\Omega\times(0,t): |u_{\varepsilon}(x,t)|\leq\delta\},
\end{equation*}
for any fixed arbitrarily small $\delta>0$. In the uniform non-degeneracy set $\mathcal{G}_{\varepsilon,\delta}$ it is clear that the limiting solution as $\varepsilon\to0^+$ is a weak solution of \eqref{HGPP}. Also, by parabolic regularity for the uniformly parabolic equation \eqref{RHGPP}, we get that $u_{\varepsilon,t}$ and $\phi_{\varepsilon}(u_{\varepsilon})\nabla\Delta^{m-1}u_{\varepsilon}$ converge in compact subsets of $\mathcal{G}=\mathcal{G}_{0,0}$. Thus, as it happens in \cite{Ber} and \cite{Cau1} we obtain that the limit function $u(x,t)=\lim_{\varepsilon\to0^+}u_{\varepsilon}(x,t)$ satisfies
\begin{equation}\label{goodset}
\iint_{\mathcal{G}}\varphi_t u\;dxdt+(-1)^m\iint_{\mathcal{G}}\nabla\varphi\left(f^n(|u|)\nabla\Delta^{m-1}u\right)dxdt=0.
\end{equation}
Then, the limit function $u(x,t)$ is a solution to the Cauchy Problem \eqref{HGPP}. Nevertheless, in the set of parabolic degeneracy $\mathcal{B}_{\varepsilon,\delta}$, we have to take $\varepsilon>0$ small enough and depending on $\delta$. Indeed, let $0<\varepsilon\leq\delta$ fixed. Applying the H\"older's inequality to the third term in \eqref{est} in the set $\mathcal{B}_{\varepsilon,\delta}$ and using that $f$ 
is a continuous monotone increasing function, we find
\begin{align}\label{conv}
&\left|\iint_{\mathcal{B}_{\varepsilon,\delta}}\nabla\varphi f^n\left((\varepsilon^2+u_{\varepsilon}^2)^{1/2}\right)\nabla\Delta^{m-1}u_{\varepsilon}dxdt\right|\\
&\leq C\left(\iint_{\mathcal{B}_{\varepsilon,\delta}} f^{2n}\left((\varepsilon^2+u_{\varepsilon}^2)^{1/2}\right)|\nabla\Delta^{m-1}u_{\varepsilon}|^2dxdt\right)^{\frac{1}{2}}\nonumber\\
&\leq Cf^{\frac{n}{2}}\left((\varepsilon^2+\delta^2)^{1/2}\right)\left(\iint_{\mathcal{B}_{\varepsilon,\delta}} f^{n}\left((\varepsilon^2+u_{\varepsilon}^2)^{1/2}\right)|\nabla\Delta^{m-1}u_{\varepsilon}|^2dxdt\right)^{\frac{1}{2}}\nonumber\\
&\leq K f^{\frac{n}{2}}\left((\varepsilon^2+\delta^2)^{1/2}\right)\to0\nonumber,
\end{align}
provided $\delta\to0$ as $\varepsilon\to0^+$. Therefore, the integration over the set of degeneracy has no distinguishable effects respect to the integration over the sets $\mathcal{G}_{\varepsilon,\delta}$ in the final limit. 
Thus, the limit as $\varepsilon\to0$ provides weak solutions to \eqref{HGPP}. 
\end{proof}
\begin{remark}
Due to the boundary conditions \eqref{boundary} the weak formulation \eqref{goodset} 
also holds for solutions to the FBP, so that our analysis is unable to distinguish solutions to the Cauchy problem from those to the FBP.
\end{remark}
Now we perform the limit when $n\to0^+$. Let us notice that the estimate provided by \eqref{conv} reflects the rate of convergence if we perform a second homotopic limit as $n\to0$, together with $\varepsilon\to0$, in the analytic regularization \eqref{REG1}, in order to obtain weak solutions emanating from the polyharmonic heat equation \eqref{PHE}.\newline 
To get such a functional convergence we need $n=n(\varepsilon)\to0^+$ such that, for $\delta\approx\varepsilon$,  
\begin{equation*}
f^{n(\varepsilon)}\left(\varepsilon\right)\to0,\quad\mbox{as } \varepsilon\to0^+,
\end{equation*}
that is,
\begin{equation}\label{est11}
n(\varepsilon)\ln f(\varepsilon)\to-\infty,\quad\mbox{as } \varepsilon\to0.
\end{equation}
Hence, we need $n=n(\varepsilon)$ such that 
\begin{equation}\label{est12}
n(\varepsilon)>>\frac{1}{|\ln f(\varepsilon)|},
\end{equation}
that will provide us with the convergence, at least in a weak sense, of solutions. Thus, under this hypotheses we arrive at a solution of the polyharmonic heat equation \eqref{PHE} 
as $\varepsilon,n(\varepsilon)\to0$, written in the very-weak form,
\begin{equation*}
\iint_{\Omega\times(0,t)}\varphi_t u\;dxdt+(-1)^{m}\iint_{\Omega\times(0,t)}\nabla\varphi\cdot\nabla\Delta^{m-1}u\;dxdt=0.
\end{equation*}
Let us remark that this is not a full definition of weak solution since it just assumes a single integration by parts, so that performing the limit as $\varepsilon,n(\varepsilon)\to0$ allows us to obtain, among other things a solution of \eqref{PHE} under the boundary conditions \eqref{boundary} (with n=0). 
It is clear now that applying this limiting argument in the integral identities does not allow us to ascertain any difference between CP-solutions and FBP-solutions.

Consequently, a stronger version of our homotopic arguments is indispensable to identify correctly the proper solutions to the Cauchy problem \eqref{HGPP}. 

Nonetheless, this homotopic approach provides us with estimates and bounds such as \eqref{est11} and \eqref{est12} which are necessary for a correct limiting process.
 Moreover, keeping in mind the oscillatory nature of the kernel $F(|y|)$ of the polyharmonic heat equation, inevitably, the proper solutions to \eqref{HGPP} are going to be oscillatory near the interface provided $n>0$ is small enough.
\subsection{Branching of solutions}\hfill\break
Next we analyze the double limit as $n\to0^+$ and $\varepsilon\to0^+$. As a consequence we obtain the solvability of the equation \eqref{HGPP} through a homotopy deformation from solutions to the polyharmonic heat equation \eqref{PHE} (which are oscillatory) to solutions to problem \eqref{HGPP}.

To do so, we now consider the regularization 
$$\psi_{\varepsilon}(u)=f^n\left((\varepsilon^2+u)^{1/2}\right),$$ 
and therefore we will handle the following regularized equation
\begin{equation}\label{eqnp}
u_t=(-1)^{m-1}\nabla\cdot\left(\psi_{\varepsilon}(u)\nabla\Delta^{m-1}u\right),
\end{equation}
with smooth compactly supported initial data. Due to parabolic estimates we may assume that $u_{\varepsilon}(x,t)$ decays exponentially at infinity. Moreover, now  we take
\begin{equation*}
n\to0^+,
\end{equation*}
as the principal deformation parameter and then we will choose the appropriate 
\begin{equation*}
\varepsilon=\varepsilon(n)\to0^+.
\end{equation*}
Next, we rewrite equation \eqref{eqnp} as 
\begin{equation*}
u_t=-(-\Delta)^mu+(-1)^{m-1}\nabla\cdot\left([1-\psi_{\varepsilon}(u)]\nabla\Delta^{m-1}u\right),
\end{equation*}
that in terms of the fundamental solution for \eqref{PHE} can be written as
\begin{equation}\label{int1}
u(x,t)=\mathcal{H}(x,t)\ast u_0(x)+\int_{0}^{t}\nabla \mathcal{H}(x,t-s)\ast \Theta_{n,\varepsilon}(u(x,s))\nabla\Delta^{m-1}u(x,s)ds,
\end{equation}
where $\Theta_{n,\varepsilon}(u)=1-\psi_{\varepsilon}(u)$. The convergence to the well posed polyharmonic heat equation \eqref{PHE} 
will strongly depend on the weak limit of the second term of \eqref{int1}, i.e., on the behaviour of
\begin{equation}\label{precondition}
\Theta_{n,\varepsilon}(u)=1-\psi_{\varepsilon}(u)=1-f^n\left((\varepsilon^2+u^2)^{1/2}\right)\to0,\quad\mbox{as}\ n,\varepsilon(n)\to0^+.
\end{equation}
Thus, to carry out such a branching analysis we need to verify the following expansion:
\begin{equation}\label{condition}
\Theta_{n,\varepsilon}(u)=-n\ln f\left((\varepsilon^2+u^2)^{1/2}\right)(1+o(n)),\quad\mbox{as}\ n,\to0^+.
\end{equation}
 on a fixed family of uniformly bounded smooth solutions $\{u_{\varepsilon}(x,t)\}$. Note that, 
 checking \eqref{condition} in the sets $\mathcal{B}_{\varepsilon,\delta}$, i.e., where $u\approx0$, requires the condition
\begin{equation}\label{hyp}
n\left|\ln f\left(\varepsilon(n)\right)\right|\to0,\quad\mbox{as }n\to0^+.
\end{equation}
This will be the principal assumption on the parameter $\varepsilon(n)$ and its relation with $n$, in order to guarantee such convergence of solutions.
\begin{proof}[Proof of Theorem \ref{th}.]
Under the condition \eqref{hyp} we perform a branching analysis following the steps performed in \cite{Cau1}. Substituting \eqref{condition} in \eqref{int1}, we find,
\begin{align}\label{int2}
u(x,t)=&\mathcal{H}(x,t)\ast u_0(x)\\
       -&n\int_{0}^{t}\nabla \mathcal{H}(x,t-s)\ast \ln f\left(\right(\varepsilon^2+u^2)^{1/2})\nabla\Delta^{m-1}u(x,s)ds+o(n^2).\nonumber
\end{align}
Now, we take 
$$u=u_{PH}(x,t)+n\varphi+o(n),$$
with $u_{PH}(x,t)$ a solution to the polyharmonic heat equation \eqref{PHE} and $\varphi$ to be determined. Thus, substituting into \eqref{int2}, and omitting terms of high order we obtain
\begin{align*}
&u_{PH}(x,t)+n\varphi=\mathcal{H}(x,t)\ast u_0(x)\\
&\!-\! n\!\!\!\int_{0}^{t}\!\!\!\nabla \mathcal{H}(x,t-s)\!\ast \ln\! f\!\left(\right(\varepsilon^2\!\!+ u_{PH}^2(x,s)\!\!+2nu_{PH}\varphi\!+\! n^2\varphi)^{1/2})\nabla\Delta^{m-1}(u_{PH}(x,s))ds.
\end{align*}
Passing to the limit as $n\to0^+$ we get the following expression for the error function
\begin{equation}\label{int3}
\varphi=\int_{0}^{t}\nabla \mathcal{H}(x,t-s)\ast \ln f\left(|u_{PH}|\right)\nabla\Delta^{m-1}u_{PH}ds.
\end{equation}
The asymptotic expansion assumes that \eqref{int3} is always finite, i.e.
\begin{equation*}
\ln f\left(|u_{PH}|\right)\in L_{loc}^1(\mathbb{R}^N),
\end{equation*}
for any $t>0$, so $f\left(|u_{PH}|\right)$ does not have zeros with an exponential decay in some neighbourhood. 
In particular, this is true if the solutions have transversal zeros. Observe that to obtain \eqref{condition} from \eqref{precondition}, we have to use the expansion for small $n>0$,
\begin{equation}\label{exp1}
1-|f|^n\equiv 1-e^{n\ln |f|}=1-(1+n\ln|f|+\ldots)=n\ln|f|+\ldots,
\end{equation}
which is true pointwise on any set $\{f\ge c_0\}$ for an arbitrarily small fixed constant $c_0>0$. However, in a small neighborhood 
of any zero of $f\left(|u_{PH}|\right)$, the expansion \eqref{exp1} is no longer true. Nevertheless, it remains true in a weak sense provided that this  zero is sufficiently transversal in a natural sense, i.e.,
\begin{equation*}
  \frac{1-|f|^n}n \rightharpoonup -\ln|f|,\quad\mbox{as } n\to 0^+
\end{equation*}
in $L^\infty_{\rm loc}$. Although this fact is rather plausible, as it is noted in \cite{Cau1}, there is not a rigorous proof 
for general solutions to the polyharmonic heat equation. Therefore, we include such assumptions in our argument.

Finally, we have to check that the perturbation $\Theta_{n,\varepsilon}(u)$ is small, which is guaranteed by the following.
\begin{itemize}
\item[(1)] At one hand, thanks to the uniform estimate \eqref{cota_m2}, using the Young inequality for convolutions, we find that $\Theta_{n,\varepsilon}(u)\to0$ as $n,\varepsilon(n)\to0^+$ for the domain $\{|u|\geq t_{1}\}$ with 
\begin{equation*}
|\ln t|\leq cf^{\frac{n}{2}}(t),\quad\mbox{with } t\geq t_{1},
\end{equation*}
for some constant $c>0$.
\item[(1b)] Observe that, in a similar way as above, thanks to the uniform estimate for $h_{\varepsilon}$ in Proposition \ref{ber_estimate}, we find that $\Theta_{n,\varepsilon}(u)\to0$ as $n,\varepsilon(n)\to0^+$ for the domain $\{|u|\geq t_{2}\}$ with 
\begin{equation*}
|\ln t|\leq cf^n(t),\quad\mbox{with } t\geq t_{2}.
\end{equation*}
\item[(2)] On the other hand, consider the integral equality \eqref{int1} in the domain where
\begin{equation*}
\mathcal{D}_{i,\varepsilon}\equiv\{\varepsilon^2\leq\varepsilon^2+u^2\leq t_{i}\},\quad i=1,2.
\end{equation*}
The maximal singularity of the term $\ln f\left((\varepsilon^2+u^2)^{1/2}\right)$ in the domain $\mathcal{D}_{i,\varepsilon}$ is achieved when $u=0$. 
Therefore, it is of order $O(\ln f(\varepsilon))$ and, hence, the perturbation term has order at most $O\left(n\ln f(\varepsilon)\right)$. Then, because of \eqref{hyp} we conclude
\begin{equation*}
O\left(n\ln f(\varepsilon)\right)\to0,\quad\mbox{as}\ n\to0^+.
\end{equation*}
\end{itemize}

\end{proof}

Let us stress that the representation $u=u_{PH}(x,t)+n\varphi+o(n)$ provided by Theorem \ref{th}, requires the convergence 
of \eqref{int3} as $n\to0^+$ which is difficult to verify for arbitrary solutions to the polyharmonic heat equation \eqref{PHE}. Nevertheless, thanks to the regularity of solutions such an integral divergence due to the formation of flat zeros can occur at a finite number of points, so it is expected at least almost everywhere.
As it happens in the case $m=2$ and $f(t)=t$, see \cite{Cau1}, solutions to \eqref{HGPP} are those which can be deformed as $n\to0^+$ through the analytic path $\psi_{\varepsilon}(u)$ to the unique solution to the polyharmonic heat equation with same initial data. Therefore, according to our development, a suitable setting of the Cauchy problem for the high order problem \eqref{HGPP} requires the whole set of solutions $\{u(x,t):\ n>0\}$ or the two-parameter set $\{u_{\varepsilon}(x,t):\ n>0,\varepsilon>0\}$ of regularized solutions. Hence, this approach results useless to treat an individual problem of type \eqref{HGPP} for a fixed $n>0$. Nonetheless, it provides qualitative properties for solutions to problem \eqref{HGPP} inherited from those solutions to the polyharmonic heat equation \eqref{PHE}.

Finally, we observe that, due to the nature of the nonlinear term $f$ we are unable to provide a conclusive answer to whether
\begin{equation*}
\lim\sup u_{\varepsilon}(x,t)=\lim\inf u_{\varepsilon}(x,t),\quad\mbox{as }\varepsilon\to0^+.
\end{equation*} 
In the case $m=2$ and $f(t)=t$, studied in \cite{Cau1}, the proof of such equality relies on the homogeneity properties of the non linear term $f(t)=t$. In fact, the proof follows studying an auxiliary problem independent of $\varepsilon$ obtained by means of a scaling in the space variables for the regularized problem \eqref{RHGPP}. Therefore these arguments automatically extends to the case of consider $f(t)=t^{\kappa}$ for $\kappa>0$. Hence, as the one-variable homogeneous functions are such a power functions, this ideas does not work when one considers a general nonlinearity $f$.

\end{document}